\definecolor{linkred}{RGB}{199,21,133}
\definecolor{linkblue}{RGB}{75,0,130}
	\titlespacing{\section}{0pt}{12pt}{0pt}
	\titlespacing{\subsection}{0pt}{6pt}{0pt}
\long\def\@footnotetext#1{% 
\H@@footnotetext{% 
\ifHy@nesting 
\hyper@@anchor{\@currentHref}{#1}% 
\else 
\Hy@raisedlink{\hyper@@anchor{\@currentHref}{\relax}}#1% 
\fi 
}}
\def\@footnotemark{% 
\leavevmode 
\ifhmode\edef\@x@sf{\the\spacefactor}\nobreak\fi 
\H@refstepcounter{Hfootnote}% 
\hyper@makecurrent{Hfootnote}% 
\hyper@linkstart{link}{\@currentHref}% 
\@makefnmark 
\hyper@linkend 
\ifhmode\spacefactor\@x@sf\fi 
\relax 
}% 
\renewcommand*\@footnotemark{% 
\leavevmode 
\ifhmode 
\edef\@x@sf{\the\spacefactor}% 
\FN@mf@check 
\nobreak 
\fi 
\H@refstepcounter{Hfootnote}% 
\hyper@makecurrent{Hfootnote}% 
\hyper@linkstart{link}{\@currentHref}% 
\@makefnmark 
\hyper@linkend 
\ifFN@pp@towrite 
\FN@pp@writetemp 
\FN@pp@towritefalse 
\fi 
\FN@mf@prepare 
\ifhmode\spacefactor\@x@sf\fi 
\relax% 
}% 
\theoremstyle{plain}
\newtheorem{theorem}{Theorem}[section]
\newtheorem{proposition}[theorem]{Proposition}
\newtheorem{lemma}[theorem]{Lemma}
\newtheorem{corollary}[theorem]{Corollary}
\newtheorem*{rep@theorem}{\rep@title}
\newcommand{\newreptheorem}[2]{%
\newenvironment{rep#1}[1]{%
 \def\rep@title{#2 \ref{##1}}%
 \begin{rep@theorem}}%
 {\end{rep@theorem}}}
\theoremstyle{definition}
\newtheorem{definition}[theorem]{Definition}
\newcommand{\ii}{{\mathit i}}
\newcommand{\R}{{\mathbb R}}
\newcommand{\N}{{\mathbb N}}
\long\def\symbolfootnote[#1]#2{\begingroup%
\def\thefootnote{\fnsymbol{footnote}}\footnote[#1]{#2}\endgroup}
\def\blfootnote{\xdef\@thefnmark{}\@footnotetext}
\begin{document}

{\Large \bfseries 
Ideally, all infinite type surfaces can be triangulated.}

{\large Alan McLeay and Hugo Parlier\symbolfootnote[1]{\small Both authors were supported by the Luxembourg National Research Fund OPEN grant O19/13865598.\\
{\em 2020 Mathematics Subject Classification:} Primary: 57K20 Secondary: 30F60, 57M15\\
{\em Key words and phrases:} Ideal triangulations, infinite type surfaces.}}

{\bf Abstract.} 
We show that any surface of infinite type admits an ideal triangulation. Furthermore, we show that a set of disjoint arcs can be completed into a triangulation if and only if, as a set, they intersect every simple closed curve a finite number of times. 
\vspace{1.0cm}

\section{Introduction}
Topological surfaces are often thought of as the result of pasting together polygons. Provided you have enough topology, pants decompositions are a natural way of decomposing (orientable) surfaces, or conversely one can build a surface by pasting together 3 holed spheres (pants) along their cuffs. This latter point of view has the advantage of being naturally related to underlying moduli spaces via the so-called length and twist Fenchel-Nielsen coordinates which parametrize the space of (marked) hyperbolic metrics of the topological surface (Teichm\"uller space). Alternatively, provided an underlying finite type surface has punctures, ideal triangulations are also used to parametrize the same space of metrics using shear coordinates. The corresponding topological construction is the process of pasting triangles along their sides and then forgetting the vertices which become ideal points (which geometrically correspond to cusps).

When passing to (orientable) surfaces of infinite type, the pants decomposition description is often used for describing how to construct these surfaces: one simply pastes together infinitely many pairs of pants to obtain a connected surface. This natural point of view has been very useful in the Teichm\"uller theory of these surfaces (see for instance \cite{AlessandriniEtAl,Basmajian-Saric, Saric}). When wanting to do the same thing for ideal triangulations, an immediate difficulty appears. In the finite type, an ideal triangulation is a maximal collection (with respect to inclusion) of disjoint simple arcs with end points in the ends \cite{Korkmaz-Papadopoulos, Penner}. In particular any collection of disjoint arcs can be completed into a triangulation. As we shall see, this no longer works in the infinite type case. 

In this short note, we observe that one can overcome this apparent difficulty.

\begin{theorem}\label{thm:main}
Any orientable surface of infinite type admits an ideal triangulation.
\end{theorem}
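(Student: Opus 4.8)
The plan is to construct an ideal triangulation of $\Sigma$ by an exhaustion argument: I will produce a locally finite family $\mathcal{T}$ of pairwise disjoint, properly embedded arcs, each of whose two ends runs out to an end of $\Sigma$, such that every component of $\Sigma\setminus\bigcup\mathcal{T}$ is an ideal triangle. (Local finiteness is what makes $\bigcup\mathcal{T}$ closed, so that ``complementary region'' is meaningful; it also makes the family maximal.) First I would fix a nice exhaustion $\Sigma=\bigcup_{n\ge 1}\Sigma_n$ by compact connected subsurfaces with $\Sigma_n\subset\mathrm{int}\,\Sigma_{n+1}$, all boundary curves essential, and no compact component of $\Sigma\setminus\mathrm{int}\,\Sigma_n$. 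The family is built as an increasing union $\mathcal{T}=\bigcup_n\mathcal{A}_n$ of finite families, where $\mathcal{A}_n$ is chosen so that the (finitely many) components of $\Sigma\setminus\bigcup\mathcal{A}_n$ split into \emph{finished} ones, each already an ideal triangle that is never touched again, and \emph{active} ones, each an essential subsurface --- in particular not one of the finitely many topological types (disc, plane, cylinder, annulus, sphere, torus, once- or twice-punctured sphere, and their bordered analogues) that admit no ideal triangulation --- whose closure is disjoint from $\mathrm{int}\,\Sigma_n$. If this can be maintained, then every point of $\mathrm{int}\,\Sigma_n$ lies in a finished triangle and meets only the finitely many arcs of $\mathcal{A}_n$, since no arc introduced after stage $n$ enters $\mathrm{int}\,\Sigma_n$; as $\bigcup_n\mathrm{int}\,\Sigma_n=\Sigma$, this gives both local finiteness and the fact that $\mathcal{T}$ triangulates $\Sigma$. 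The base case is free: take $\mathcal{A}_0=\emptyset$, whose only active component is $\Sigma$, which is infinite type and hence not on the forbidden list.

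The heart of the argument is the inductive step from $\mathcal{A}_{n-1}$ to $\mathcal{A}_n$. Since there are finitely many active components it suffices to treat one, $R$: a non-compact surface whose frontier in $\Sigma$ is a union of arcs of $\mathcal{A}_{n-1}$, essential in the sense above. I must add finitely many disjoint proper arcs inside $R$ that cut off finitely many ideal triangles whose union contains the compact set $\overline{R\cap\Sigma_n}$, leaving active pieces that are again essential and have closure disjoint from $\mathrm{int}\,\Sigma_n$. This is a bordered, relative version of the theorem localised near $\overline{R\cap\Sigma_n}$: one chooses a compact subsurface $K$ with $\overline{R\cap\Sigma_n}\subseteq K\subseteq R$ whose frontier beyond $\Sigma_n$ is a finite union of simple closed curves, then triangulates a collar neighbourhood of $K$ in $R$ using finitely many bi-infinite arcs, taking care to ``open up'' each frontier circle by running new arcs out to ends of $\Sigma$ on its far side rather than stopping the triangulation at the circle. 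The finiteness of the added data follows from the bounded topological complexity of $K$ together with the classification of compact surfaces with marked boundary points, which pins down exactly which finite arc configurations cut such a piece into triangles.

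The main obstacle is precisely to keep the step from reaching a dead end, which is why the naive ``complete a maximal disjoint family of arcs'' argument fails, as the introduction notes. Two things can go wrong and must be prevented. First, a trimming must never leave behind an active piece too simple to triangulate (a half-plane, a cylinder, or a punctured sphere of low complexity); this is arranged by first absorbing any such trivial sub-pieces of $R\cap\Sigma_n$ into triangles and by grouping the ends of $\Sigma$ carried by $R$ so that each leftover active piece has at least three ends, or positive genus. Second --- and this is the subtler point, quantified by the paper's second theorem --- one must never build a family meeting some simple closed curve of $\Sigma$ infinitely often, since such a family cannot be completed; this forces the new arcs added at each stage to cross the frontier circles of $K$ only finitely many times. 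Finally, the presence of ends accumulated by genus, or of ends that are accumulation points of other ends, is what prevents a clean proof via a complete hyperbolic metric with cusps, so the construction is carried out topologically, using the classification of infinite-type surfaces to put the exhaustion and the successive trimmings into a standard form; a preliminary cut of $\Sigma$ along a single essential proper arc reduces the statement to pieces with boundary, if one prefers to run the relative version throughout.
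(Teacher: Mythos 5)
Your global bookkeeping (finite stages, ``finished'' triangles never revisited, active pieces with closure disjoint from $\mathrm{int}\,\Sigma_n$, hence local finiteness and exhaustion) is coherent, but it concentrates the entire content of the theorem into the inductive step, and that step is asserted rather than proved. Saying that one ``triangulates a collar neighbourhood of $K$ in $R$ using finitely many bi-infinite arcs, taking care to open up each frontier circle by running new arcs out to ends of $\Sigma$ on its far side'' is exactly a relative form of the statement being proved, and it is precisely where the known difficulty sits. The tool you invoke --- the classification of compact surfaces with marked boundary points --- does not apply: none of your new arcs may end on $\partial K$; every one of them must run out to an end of $\Sigma$, typically through the very region you intend to leave active. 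What has to be shown is that finitely many disjoint ideal arcs can be chosen so that \emph{every} complementary region meeting $K$ is an ideal triangle (note in particular that the region of $R$ adjacent to each previously chosen boundary arc must itself become a triangle having that whole arc as a side, since adjacent regions along an arc do not change), while the leftover regions are neither ideal monogons nor bigons (strips between isotopic arcs, half-planes) and still carry all the remaining ends and genus. No construction of such a configuration is given, and your maintenance criterion is not the right invariant either: a one-ended infinite-type leftover (a flute tail or an infinite-genus tail) is perfectly acceptable even though it has neither three ends nor any constraint you can impose by ``grouping ends'' (the end space is given to you, not chosen), whereas the genuinely fatal leftovers are the degenerate polygonal ones listed above, which your forbidden list does not capture.

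For comparison, the paper never performs a local, finitely-many-arcs engulfing step. It chooses a pants decomposition, passes to a spanning tree of its dual graph to extract a sub-multicurve $\Gamma$ cutting $X$ into one-ended flute pieces (Lemma \ref{lem_flutedecomp}), triangulates each flute piece all at once by a perforated Farey triangulation, and then repairs the cut curves, which appear as holes in triangles, using the six-arc Lemma \ref{lem_twotriangles}; this yields the relative statement Theorem \ref{thm_restated}. If you want to salvage your exhaustion scheme you will essentially have to prove such a relative triangulation statement first --- at which point the exhaustion is no longer needed for Theorem \ref{thm:main} itself (the paper uses an exhaustion-style argument only later, in the proof of Theorem \ref{thm:characterization}, and there it explicitly invokes Theorem \ref{thm_restated} to handle the infinite pieces wholesale rather than finitely many arcs at a time).
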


Before outlining the proof we make note of the following restriction for such ideal triangulations.

\begin{theorem}\label{thm:characterization}
An ideal multiarc is a subset of an ideal triangulation if and only if it intersects any simple closed curve a finite number of times.
\end{theorem}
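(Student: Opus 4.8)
\emph{Proof strategy.} The forward implication is the ``restriction'' mentioned above, and it follows from local finiteness of triangulations. An ideal triangulation $T$ of $\Sigma$ has all of its vertices at the ends, so every point of $\Sigma$ lies in the closure of at most two triangles and hence has a neighbourhood contained in the union of at most two triangles, meeting only finitely many edges of $T$. Given a simple closed curve $\gamma$, take it transverse to the $1$-skeleton of $T$: being compact, it meets only finitely many edges of $T$, each in finitely many points, so if $A\subset T$ then the geometric intersection number satisfies $i(\gamma,A)\le|\gamma\cap T|<\infty$.

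For the converse, suppose $A$ meets every simple closed curve in finitely many points; the plan is to re-run the construction behind Theorem~\ref{thm:main} while carrying $A$ along. Fix a nested exhaustion $\Sigma_1\subset\Sigma_2\subset\cdots$ of $\Sigma$ by connected compact subsurfaces with $\bigcup_n\Sigma_n=\Sigma$, each $\partial\Sigma_n$ a union of essential simple closed curves made transverse to $A$ (which is possible because $i(A,\partial\Sigma_n)<\infty$). Since each $\Sigma_n$ is compact while every arc of $A$ is a properly embedded line running out to the ends of $\Sigma$, any arc of $A$ that meets $\Sigma_n$ must cross $\partial\Sigma_n$; hence $A\cap\Sigma_n$ is a \emph{finite} multiarc, with at most $\tfrac12\,|A\cap\partial\Sigma_n|$ components. (If one prefers to exhaust by subsurfaces carrying some ends as punctures, the same conclusion holds: a finite-type surface admits only finitely many isotopy classes of disjoint essential arcs, so infinitely many arcs in $A\cap\Sigma_n$ would force infinitely many pairwise isotopic ones $a_1,a_2,\dots$, and then any essential simple closed curve crossing $a_1$ — one exists since $\Sigma$ is of infinite type — would cross $A$ infinitely often.)

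Now build a compatible family of finite triangulations stage by stage. Complete the finite multiarc $A\cap\Sigma_1$ to a triangulation of $\Sigma_1$ with marked points on $\partial\Sigma_1$ (possible in finite type), and, given the triangulation on $\Sigma_n$, extend it over the finite-type region $\Sigma_{n+1}\setminus\mathrm{int}(\Sigma_n)$ so that it also realises $A\cap(\Sigma_{n+1}\setminus\mathrm{int}(\Sigma_n))$, keeping the marked points on $\partial\Sigma_n$ matched from both sides. The union $T$ over all $n$ decomposes $\Sigma$ into triangles, is locally finite (each triangle is born at a single stage, and the regions $\Sigma_{n+1}\setminus\mathrm{int}(\Sigma_n)$ cover $\Sigma$ locally finitely), hence is an ideal triangulation, and every arc of $A$ lies in some $\Sigma_n$ and so is part of $T$. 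Equivalently, one may start from any triangulation produced by Theorem~\ref{thm:main} and modify it by flips, region by region, to absorb $A$.

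I expect the real work to lie in the assembly step, which is essentially the content of Theorem~\ref{thm:main}: the extensions must be carried out so that no edge ever lies inside a curve $\partial\Sigma_n$ (such an edge would be illegitimate in the limit), so that the finitely many marked points on each $\partial\Sigma_n$ match exactly from both sides, and — the crux — so that every bi-infinite edge of the limiting complex converges to a single end of $\Sigma$ rather than wandering among them. The only genuinely new ingredient for the present theorem is the finiteness of $A\cap\Sigma_n$ noted above, which lets the finite-type completion (or flipping) step be applied at each stage to a system that already contains the relevant part of $A$.
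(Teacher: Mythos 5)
Your forward direction is fine and is essentially the paper's argument: an accumulation point of $A\cap\gamma$ could lie neither in a complementary disk (which meets no arc of the triangulation other than its at most three sides) nor on an edge, so compactness of $\gamma$ forces $i(\gamma,A)<\infty$. Your observation that $A\cap\Sigma_n$ is a finite multiarc for any compact exhaustion is also correct and is the same finiteness that drives the paper's induction.

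The converse, however, has a genuine gap, and it sits exactly where you place it yourself: the assembly step. Triangulating $\Sigma_1$ and then the compact annular regions $\Sigma_{n+1}\setminus\mathrm{int}(\Sigma_n)$ with matched marked points on the curves $\partial\Sigma_n$ does not, as stated, produce an ideal triangulation. Since you (rightly) forbid edges lying on $\partial\Sigma_n$, the triangles on either side of each boundary circle merge across the non-edge boundary segments, so the complementary regions of the limiting arc system need not be triangles; for the limit ``edges'' to be arcs at all, each marked point must be met by exactly one edge from each side, and each maximal concatenation must be a properly embedded line converging to a single end in both directions rather than crossing $\partial\Sigma_n$ for all $n$ and wandering. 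None of this is arranged, and it cannot simply be outsourced to Theorem \ref{thm:main}: the proof of that theorem is not a compact-exhaustion argument and provides no relative statement adapted to prescribed marked-point data on an exhaustion. The paper's route is different: it first proves a relative version (Theorem \ref{thm_restated}, giving a multiarc whose complement consists of triangles and one-holed monogons around boundary curves, via the flute decomposition of Lemma \ref{lem_flutedecomp} and perforated Farey triangulations), then completes $\mu_0$ by induction over the curves $\alpha_i$ of a pants decomposition, in each step either cutting off a finite-type piece containing $\alpha_i$ by auxiliary arcs disjoint from the current multiarc, or, when $\alpha_i$ meets it, working in a regular neighborhood of $\alpha_i\cup\mu_{i-1}$ (finite by the intersection hypothesis). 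To make your sketch into a proof you would have to supply an argument of comparable strength for the compatibility and end-convergence of the glued triangulations, which is precisely the content you have deferred.
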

Hence, an ideal triangulation is a multiarc maximal with respect to inclusion among all multiarcs which intersect any given simple closed curve a finite number of times. And as a consequence:
\begin{corollary}\label{cor:twoends}
Any ideal triangulation of an orientable surface must contain only finitely many arcs between any distinct pair of ends.
\end{corollary}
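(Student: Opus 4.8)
The plan is to deduce the corollary from Theorem~\ref{thm:characterization} by contradiction. Suppose an ideal triangulation $T$ contained infinitely many arcs $a_0,a_1,a_2,\dots$, all running between one fixed pair of distinct ends $e_1\neq e_2$. Being pairwise disjoint, these arcs form an ideal multiarc $\mu\subseteq T$, so by Theorem~\ref{thm:characterization} the multiarc $\mu$ intersects every simple closed curve only finitely often. I will exhibit a simple closed curve that $\mu$ meets infinitely often, which is the desired contradiction.

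The curve comes from the following standard fact about ends: any two distinct ends $e_1$ and $e_2$ of $\Sigma$ are separated by an (essential) simple closed curve, that is, there is a simple closed curve $\delta\subset\Sigma$ such that $e_1$ and $e_2$ lie in distinct connected components of $\Sigma\setminus\delta$. One way to obtain it: $e_1$ has a connected neighbourhood in $\Sigma$ whose frontier is a single simple closed curve and whose closure is disjoint from some neighbourhood of $e_2$, and one takes $\delta$ to be that frontier. Now each $a_i$ is a properly embedded arc with one endpoint at $e_1$ and the other at $e_2$, so $a_i$ enters both complementary components of $\delta$ and, being connected, must cross $\delta$. The same holds for any simple closed curve $\delta'$ isotopic to $\delta$, since $\delta'$ still separates $e_1$ from $e_2$; hence $\delta'\cap a_i\neq\emptyset$ for every $i$. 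As the $a_i$ are pairwise disjoint, this forces $|\delta'\cap\mu|=\sum_i|\delta'\cap a_i|=\infty$ for every representative $\delta'$, so $\mu$ meets $\delta$ infinitely often. This contradicts $\mu\subseteq T$ together with Theorem~\ref{thm:characterization}, proving the corollary.

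I do not expect any step to be a genuine obstacle once Theorem~\ref{thm:characterization} is available: the only points needing a word of justification are the existence of the separating curve $\delta$ (standard, from the structure of neighbourhoods of an end) and the observation that being separated by $\delta$ is invariant under isotopy of $\delta$, which is precisely what prevents the infinitely many intersection points from being removed. A more self-contained alternative would construct $\delta$ directly by threading it through the complementary regions that consecutive arcs $a_i,a_{i+1}$ cut off near $e_1$ and $e_2$, but invoking the characterization theorem makes such a construction unnecessary.
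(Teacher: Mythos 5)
Your proof is correct and takes essentially the same route as the paper, which states Corollary \ref{cor:twoends} as an immediate consequence of Theorem \ref{thm:characterization}: a simple closed curve separating the two ends must meet every one of the infinitely many disjoint arcs, and since any isotopic curve still separates the ends, these intersections cannot be removed, contradicting the finiteness required by the theorem. Your remarks on the existence of the separating curve and the isotopy-invariance of separation just fill in details the paper leaves implicit.
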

Following this observation, the first step in proving Theorem \ref{thm:main} is to show that we can triangulate a one-ended surface with countably many boundary components.  We then show that any infinite type surface can by decomposed into such subsurfaces.

%{\bf Acknowledgments.}
%The authors thank...

\section{Setup}
By an orientable surface $X$ of infinite type, we mean an orientable topological surface with infinitely generated fundamental group. The homeomorphism type of a such a surface is determined by its genus and space of ends.

An end is a nested sequence $\{U_i\}_{i\in \N}$ of subsurfaces of $X$ (so $U_i\supset U_{i+1}$). Two ends $\{U_i\}_{i\in \N}$, $\{V_j\}_{j\in \N}$ are equivalent if for any $i_0$, there exists a $j_0$ such that $U_{i_0} \subset V_{j_0}$ and vice versa. 
%Its space of (equivalence classes of) ends is denoted $\Ends[X]$. 

If each $U_i$ contains genus then we say the end is \emph{nonplanar}, and \emph{planar} otherwise.  An end $e = \{U_i\}_{i\in \N}$ is \emph{isolated} if there exists some subsurface $U_{i_0}$ such that every nested sequence containing $U_{i_0}$ is equivalent to $e$.  Usually, we call an isolated planar end a puncture or cusp.

We adopt the standard definition of an arc between ends (see for instance \cite{DGGV}, and note this is more general than the definition in \cite{Fossas-Parlier} which is adapted for flip graphs).

\begin{definition}
An arc of $X$ is a proper embedding $\alpha:\R \to X$ such that the each of its ends correspond to an end of $X$.  Note that since $X$ is orientable, each arc has exactly two sides.
\end{definition}

Arcs play an important role in dynamical and topological considerations on infinite type surfaces, see for instance 
\cite{Bavard,FGM}. The definition above means that there exist ends $\{U_i\}_{i\in \N}$ and $\{V_j\}_{j\in \N}$ (not necessarily distinct) such that for all $i_0$, there exists $t_0 \in \R$ such that $\alpha(t) \in U_{i_0}$ for all $t>t_0$, and there exists $\{V_j\}_{j\in \N}$ such that for all $j_0$, there exists $s_0$ with $\alpha(t) \in V_{j_0}$ for all $t<s_0$. An arc is non-trivial if its complementary region does not contain a topological disk. This is equivalent to there not existing an end $\{U_i\}_{i\in \N}$ such that for any $i_0 \in \N$, the arc can be freely homotoped to an arc lying entirely inside $U_{i_0}$.

Curves (or simple closed curves) are much easier to define: they are just embeddings of $S^1$. They are non-trivial if they are not freely homotopic to a point or a puncture.

We are only interested in curves and arcs up to (free) homotopy. 

A pants decomposition is a collection of dijoint curves whose complementary regions are homeomorphic to thrice punctured spheres. A corollary of Richards' classification theorem for infinite type surfaces \cite{Richards} is the following:
\begin{corollary}
Any surface $X$ can be obtained by pasting together pants. Equivalently, any surface can be decomposed into a pants decomposition by cutting along disjoint simple closed curves.
\end{corollary}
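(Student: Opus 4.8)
The plan is to build a pants decomposition by hand from a compact exhaustion of $X$, engineered so that the region between consecutive terms is trivial to cut into pairs of pants. First observe that the two assertions are equivalent: a pants decomposition exhibits $X$ as a union of pairs of pants glued along cuffs, and conversely any such gluing pattern produces a surface together with a distinguished pants decomposition. So it suffices to produce a collection $\Gamma$ of pairwise disjoint, pairwise non-homotopic, essential simple closed curves, meeting every compact set in finitely many curves, whose complementary components are all thrice-punctured spheres.

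Next I would fix a compact exhaustion $X=\bigcup_{n\ge 1}K_n$ with each $K_n$ a compact connected subsurface and $K_n\subset\operatorname{int}(K_{n+1})$ — such exhaustions are exactly the data underlying Richards' proof of the classification theorem — and then normalise it. Discarding any boundary curve of any $K_n$ that bounds a disk (and reindexing) one may assume every component of every $\partial K_n$ is essential; writing $R_n:=\overline{K_{n+1}\setminus K_n}$, this already forces every component of $R_n$ to have nonempty boundary and to be no disk. One then enlarges each $K_n$ to swallow the annular components of $R_n$: since $K_{n+1}$ is connected, such an annulus always has a boundary curve on $\partial K_n$, so this is a finite operation at each level and does not disturb essentiality. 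The single case that resists this cleanup is a collar annulus cutting off an \emph{isolated planar end} of $X$; these cannot be removed, so I would leave them in place and deal with them at the end.

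Then, for each $n$, I would apply the classical finite-type fact that a compact orientable surface with boundary which is neither a disk nor an annulus admits a pants decomposition — finitely many disjoint, essential, pairwise non-homotopic curves, none boundary-parallel, whose complement is a union of pairs of pants with the original boundary circles as cuffs — to each non-annular component of $R_n$, obtaining a finite family $\Gamma_n$. Taking $\Gamma$ to be the union over $n$ of the $\Gamma_n$ together with those components of $\partial K_n$ that are not puncture-parallel gives a disjoint, locally finite family of essential curves; I would then check that its complementary components are precisely the pants cut out inside the various $R_n$, together with, for each isolated planar end $e$, one thrice-punctured sphere formed by gluing the nested collar annuli around $e$ to the unique adjacent pair of pants along the discarded common cuffs. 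That $\Gamma$ is the required pants decomposition.

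I expect the main obstacle to be the bookkeeping ensuring that the curves of $\Gamma$ are genuinely pairwise non-homotopic, so that no complementary region is an annulus rather than a pair of pants: within a fixed $R_n$ this is handled by choosing the pants curves not boundary-parallel, and across consecutive pieces it follows since $\partial K_{n+1}$ separates $R_n$ from $R_{n+1}$ and neither has an annular component; but the isolated-planar-end case must be checked directly to see that the infinitely many nested collars there really do assemble with one adjacent pair of pants into a single thrice-punctured sphere. Everything else reduces to the finite-type decomposition lemma applied in a locally finite way along the exhaustion.
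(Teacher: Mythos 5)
Your construction is essentially correct, but it is a genuinely different route from the paper's: the paper offers no direct construction at all and simply records this statement as an immediate consequence of Richards' classification theorem for noncompact surfaces (cited as \cite{Richards}), whereas you reprove it from scratch by running the classical finite-type pants decomposition along a compact exhaustion. What your approach buys is self-containedness --- you only need the elementary fact that a compact orientable surface with essential boundary and negative Euler characteristic decomposes into pants with its boundary circles as cuffs, applied locally finitely --- at the cost of exactly the bookkeeping you flag: inessential and annular layers, and the collars of isolated planar ends, which in the citation-based route are absorbed invisibly into Richards' construction of the surface from its genus and end space. Two small wrinkles in your bookkeeping are worth repairing (neither is fatal): first, the level-by-level ``swallowing'' of annular components of $R_n$ into $K_n$ does not terminate in one pass when several product layers stack up, since an annulus of $R_{n+1}$ swallowed into $K_{n+1}$ reappears as an annular component of the new $R_n$; the clean fix is to absorb each \emph{maximal finite} stack of consecutive annuli in one step (equivalently, delete from $\Gamma$ all but one of the mutually isotopic boundary curves along such a stack, merging the annuli into the neighbouring pair of pants), reserving the infinite stacks for your isolated-planar-end treatment, which is correct. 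Second, your $\Gamma$ as written only decomposes the shells $R_n=\overline{K_{n+1}\setminus K_n}$, so you must also decompose the base piece $K_1$ itself (or start the exhaustion with $K_0=\emptyset$). With those adjustments, and noting (as the paper implicitly does) that ``thrice punctured sphere'' is to be read so that cuffs may be boundary circles or punctures, your argument goes through and gives a locally finite pants decomposition, hence the equivalent pasting description.
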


Observe that dual to a pants decomposition is a (possibly infinite) trivalent (or cubic) graph. 

\begin{definition}
An ideal triangulation of $X$ is a collection of arcs $\mu = \{\mu_i\}_{i \in \N}$ such that each component of $X \setminus \mu$ is an open disk and each disk is bounded by at most three sides of arcs. 
\end{definition}

We end this section with a triangulation of the flute surface (see \cite{Basmajian}).

\begin{proposition}\label{prop_flute}
There exists an ideal triangulation of the flute surface.
\end{proposition}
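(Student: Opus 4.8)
The plan is to write down an ideal triangulation by hand in a convenient planar model. Recall that the flute surface is the genus-zero surface with exactly one non-isolated end, all of whose other ends are punctures that accumulate onto it; concretely, $X = \R^2 \setminus \Z$ realises it, with the punctures at the integers and the single non-isolated end being the end at infinity. (That this $X$ is the flute is a direct application of Richards' classification: it is planar, its only non-isolated end is the end at infinity, and its isolated ends are precisely the punctures, which converge to it.) I would then exhibit an explicit, locally finite family of pairwise disjoint arcs cutting $X$ into ideal triangles, and check the conditions directly.

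First, for each $n \in \Z$ let $v_n$ be the vertical line $\{x = n + \tfrac12\}$: it is a proper embedding of $\R$ both of whose ends leave every compact subset of $X$, so both correspond to the non-isolated end, and hence $v_n$ is an arc. The $v_n$ are pairwise disjoint and cut $X$ into the vertical strips $S_n = \{\, n - \tfrac12 < x < n + \tfrac12 \,\} \setminus \{n\}$, each a disk with one puncture removed. Inside $S_n$ I would add the two rays $w_n^{+} = \{(n, y) : y > 0\}$ and $w_n^{-} = \{(n, y) : y < 0\}$; each is a proper embedding of $\R$ with one end converging to the puncture $n$ and the other escaping to infinity, hence an arc from the puncture $n$ to the non-isolated end. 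The whole family $\mu = \{v_n : n \in \Z\} \cup \{w_n^{+}, w_n^{-} : n \in \Z\}$ is pairwise disjoint, since its members lie over distinct $x$-coordinates apart from $w_n^{+}$ and $w_n^{-}$, which are separated by the missing point $n$; and it is locally finite, since over any bounded range of $x$-coordinates only finitely many arcs occur.

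It then remains to verify that each component of $X \setminus \mu$ is a disk bounded by three arcs. Removing $w_n^{+} \cup w_n^{-}$ from $S_n$ splits it into the open half-strips $L_n = \{\, n - \tfrac12 < x < n \,\}$ and $R_n = \{\, n < x < n + \tfrac12 \,\}$, and $X \setminus \mu$ is exactly the disjoint union of all the $L_n$ and $R_n$. Each $L_n$ is an open strip, hence a disk, and its frontier in $X$ consists of the three arcs $v_{n-1}$, $w_n^{+}$, $w_n^{-}$, with ideal vertices the non-isolated end (between $v_{n-1}$ and $w_n^{+}$), the puncture $n$ (between $w_n^{+}$ and $w_n^{-}$), and the non-isolated end again (between $w_n^{-}$ and $v_{n-1}$); symmetrically $R_n$ is a disk bounded by $w_n^{+}$, $w_n^{-}$, $v_n$. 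This exhibits $\mu$ as an ideal triangulation of $X$. The only point that genuinely needs care is the behaviour near the non-isolated end: one must confirm that $\mu$ is locally finite, so that $X \setminus \mu$ really decomposes as stated with no extra region accumulating at that end, and that each arc is properly embedded with both ends limiting onto ends of $X$. Both hold immediately here because the arcs sit over the discrete set $\tfrac12\Z$ of $x$-coordinates, so beyond these checks the proof is just inspection of the planar picture.
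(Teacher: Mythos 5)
Your construction is correct, and it proves the proposition by a genuinely different route from the paper. You work in the explicit model $\R^2\setminus\Z$ and build a ``ladder'' triangulation by hand: the vertical lines $v_n$ cut the surface into once-punctured strips, and the two rays $w_n^{\pm}$ cut each strip into two ideal triangles, each bounded by exactly three arcs; your checks of properness, disjointness, local finiteness, and the identification of $\R^2\setminus\Z$ with the flute surface via Richards' classification are all fine. The paper instead starts from the Farey triangulation of the disk, punctures each Farey triangle to obtain a copy of the flute surface, and then completes the projected Farey arcs to a triangulation by triangulating each punctured triangle. The trade-off: your argument is more elementary and entirely explicit, whereas the paper's choice is made with the sequel in mind --- the punctured Farey picture is exactly what gets upgraded to the ``perforated Farey triangulations'' (triangles carrying punctures or boundary circles, with every arc having infinitely many perforations on each side, hence no isotopic arcs) that drive the proof of the general Theorem \ref{thm:main} via Lemma \ref{lem_flutedecomp}. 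So your proof fully settles Proposition \ref{prop_flute}, but it does not by itself provide the flexible building block (flutes with compact boundary components) that the paper's version supplies for the later arguments.
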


\begin{proof}
First we take the standard Farey triangulation $T$ of an open disk $D$. By removing a single point from each triangle of $T$ we arrive at a surface $F \subset D$ homeomorphic to the flute surface (see Figure \ref{fig:puncturedFarey}). 

\begin{figure}[h]
\begin{center}
\includegraphics[width=8cm]{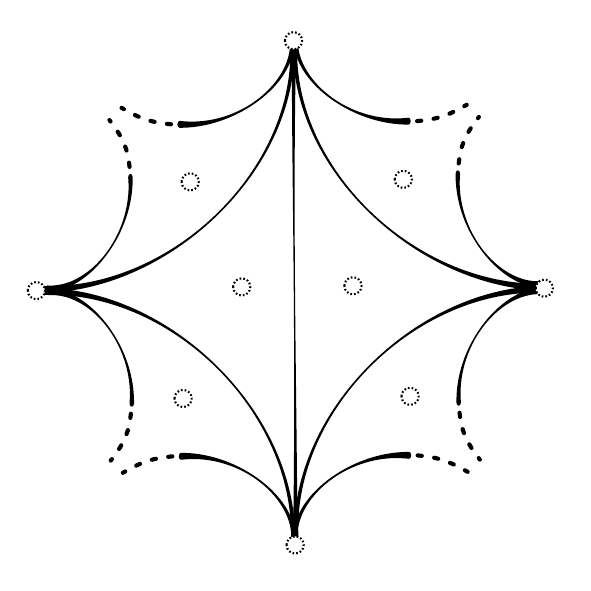}
\vspace{-24pt}
\end{center}
\caption{The punctured Farey triangulation}
\label{fig:puncturedFarey}
\end{figure}

While the projection of $T$ onto $F$ is not a triangulation, it can be extended to one by triangulating each connected component of $F \setminus T$.  Each such component is homeomorphic and can be easily triangulated as in Figure \ref{fig:punctured-triangle}.
\end{proof}

\begin{figure}[H]
\begin{center}
\includegraphics[width=5cm]{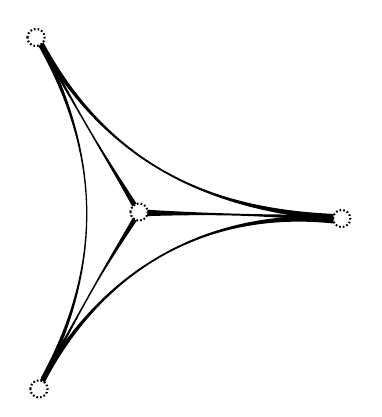}
\vspace{-24pt}
\end{center}
\caption{A triangulation of a punctured triangle}
\label{fig:punctured-triangle}
\end{figure}
\section{Triangulations galore}

Our construction for more general triangulations draws inspiration from the example above.

\subsection{Perforated Farey Triangulation}

To obtain a perforated Farey triangulation, you start with the Farey triangulation of the hyperbolic plane and you add a puncture or a boundary curve to a collection of triangles (see Figures \ref{fig:PerforatedFarey} and \ref{fig:triangles}), and so that each arc leaves a puncture or a boundary curve on either side.

\begin{figure}[h]
\begin{center}
\includegraphics[width=6cm]{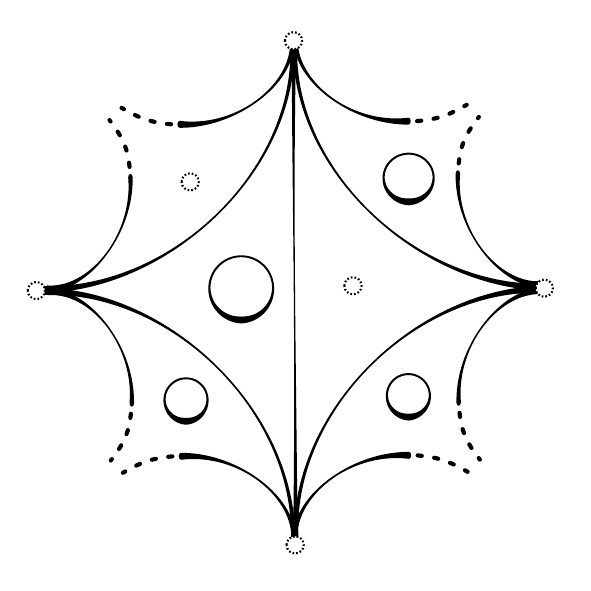}
\vspace{-24pt}
\end{center}
\caption{A perforated Farey triangulation}
\label{fig:PerforatedFarey}
\end{figure}

\begin{figure}[h]
\begin{center}
\includegraphics[width=9cm]{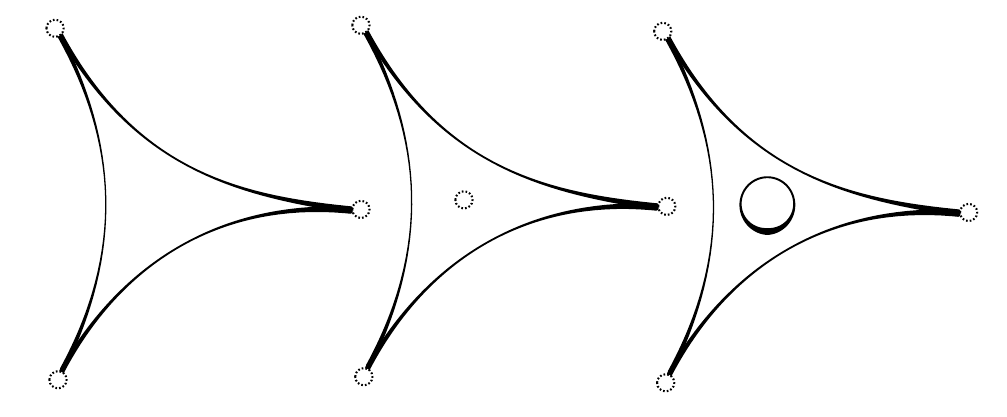}
\vspace{-24pt}
\end{center}
\caption{Triangles may be left as triangles, have a puncture, or a boundary curve}
\label{fig:triangles}
\end{figure}

Note that with this condition, no two arcs are isotopic and any arc leaves infinitely many punctures or boundary curves on either of its sides.

From now on we will use the term flute surface to refer to any surface obtained in this way.  That is, our flute surfaces may have compact boundary components as opposed to just punctures.

%We will use Proposition \ref{prop_flute} as inspiration for our general construction.  Again, consider the standard Farey triangulation $T$ of an open disk $D$.  Now, remove an open disk from the interior of every triangle in $T$.  We name the resulting surface $F^B$ the \emph{Boundaried Flute surface} and it is the unique one ended planar surface with countably many compact boundary components.

%We call the projection of $T$ onto the subsurface $F^B$ a \emph{Perforated Farey triangulation} of the boundaried flute. 

The following lemma is illustrated by Figure \ref{fig:six}.

\begin{lemma}\label{lem_twotriangles}
Any two one holed triangles pasted together along their holes can be triangulated using 6 arcs.
\end{lemma}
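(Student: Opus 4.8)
The plan is to describe the "one holed triangle" explicitly, paste two copies together, and exhibit $6$ arcs cutting the resulting surface into triangular disks. A one holed triangle is a sphere with one boundary curve (the hole) and three distinguished ideal vertices on its "outer" side; equivalently, it is a disk with three marked boundary points and one interior boundary component. When we paste two such pieces $P_1, P_2$ along their holes, the outer boundaries contribute $3+3 = 6$ ideal points, say $a_1, b_1, c_1$ on $P_1$ and $a_2, b_2, c_2$ on $P_2$, and the result is a planar surface whose ends are exactly these $6$ punctures (or the corresponding ideal points of the arcs leaving them). The first step is to draw this picture carefully, keeping track of which region of $P_i$ each ideal point sits in relative to the three sides of the outer triangle.

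**The count and the arcs.** First I would compute the target: by an Euler characteristic count, a triangulation of a surface whose disk-complement has $F$ triangular faces and $E$ arcs satisfies $2E = 3F$ when every arc is shared by two triangles, so $F = 2E/3$; with $E = 6$ we get $F = 4$ triangles. So the claim is that the glued-up surface decomposes into $4$ triangular disks using $6$ arcs. Next I would exhibit the arcs: three of them are the "long" arcs running from a vertex of the outer triangle of $P_1$, across the pasted hole, to a vertex of the outer triangle of $P_2$ — these are the arcs that "see through" the hole. The remaining three are arcs that stay essentially in one of the two pieces, completing each one holed triangle (together with the through-arcs) into two triangles. I would verify combinatorially that the $6$ arcs are pairwise non-isotopic and disjoint, and that the complement is exactly $4$ open disks each bounded by three arc-sides — appealing to Figure \ref{fig:six} for the concrete picture rather than enumerating every face boundary word.

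**The main obstacle.** The genuinely delicate point is checking that the complementary regions really are \emph{disks bounded by at most three sides}, and in particular that no region is a bigon, a once-punctured monogon, or an annulus around the (now interior, hence vanished) hole. The hole curve itself is not a curve of the final surface — it has been absorbed — but one must make sure the arcs crossing it are chosen so that no leftover region retracts onto it. Concretely, I would argue that after cutting along the three through-arcs, the hole is entirely destroyed (each through-arc crosses it once, three crossings suffice to cut the annular neighbourhood of the hole into simply connected pieces), and then the remaining three arcs subdivide what is left into triangles. Verifying this "destruction of the hole" carefully, and that the resulting pieces have exactly three ideal sides and no extra punctures, is the crux; everything else is bookkeeping best delegated to the figure.
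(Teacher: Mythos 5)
There is a genuine gap, and it surfaces already in your Euler characteristic bookkeeping. The glued surface is a cylinder whose two boundary circles each carry three ideal vertices, and the six sides of the two original triangles remain as sides of the complementary regions, so they must be counted: with $6$ new arcs (each bounding two complementary regions) and $6$ old boundary sides (each bounding one) the count reads $3F = 2\cdot 6 + 6$, giving $F=6$ triangles, each with exactly one old side. Your relation $2E=3F$ assumes there are no boundary sides at all and produces $F=4$, which corresponds to no correct reading of the lemma; in particular the goal of ``two triangles in each piece'' cannot be the right target, since any triangle having a through-arc as a side straddles both pieces.

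The second, related, problem is that the explicit arc system you describe cannot be completed into a triangulation. After inserting the three through-arcs joining corresponding vertices of the two triangles, the complement consists of three quadrilaterals, each bounded by two through-arcs, one side of $P_1$ and one side of $P_2$; the only non-trivial arcs available inside such a quadrilateral are its diagonals, which again join a vertex of $P_1$ to a vertex of $P_2$ and hence cross the pasted curve. More generally, an essential arc with both endpoints on the same side of the pasted curve and disjoint from it cuts off a triangle containing the remaining vertex of that side, and any two non-isotopic such arcs on the same side must intersect; consequently at most two of the six arcs (one per side) can avoid the pasted curve, so your ``three through, three staying in one piece'' split is impossible. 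The configuration the paper intends (its proof is exactly Figure \ref{fig:six}) is the lateral surface of a triangular prism: three ``vertical'' arcs plus one diagonal in each of the three resulting squares, i.e.\ six arcs, all crossing the pasted curve, cutting the cylinder into six triangles, each using one side of $P_1$ or $P_2$. Your instinct about the crux --- checking that the pasted curve is destroyed and no bigon, punctured monogon or annular region survives --- is correct, but the verification has to be run on this $6$-triangle picture, not on the $4$-triangle one.
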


\begin{figure}[h]
\begin{center}
\includegraphics[width=3cm]{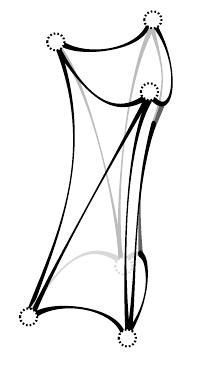}
\vspace{-24pt}
\end{center}
\caption{Triangulating a cylinder with three ideal vertices on each boundary}
\label{fig:six}
\end{figure}

A key observation for our purposes is that any surface can be decomposed into subsurfaces homeomorphic to flute surfaces.

\begin{lemma}\label{lem_flutedecomp}
Any pants decomposition contains a collection of curves $\Gamma$ such that $X\setminus \Gamma$ is a disjoint union of flute surfaces.
\end{lemma}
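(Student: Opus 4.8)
The plan is to work with the trivalent graph $G$ dual to the given pants decomposition $\PP$: vertices of $G$ are pairs of pants, and edges are the curves of $\PP$ (with the convention that a curve with a pair of pants on both sides gives an edge, while a curve bounding a pants on one side only — not possible here since all complementary regions are pants — would give a half-edge; boundary curves and punctures of $X$ simply leave a leg of the pants unpaired). I want to choose a subset $\Gamma \subset \PP$ so that each component of $X \setminus \Gamma$ is a flute surface in the sense defined above, i.e. is obtained from a linear bi-infinite (or half-infinite, or finite) chain of one-holed triangles/pants glued in a row, with a puncture or boundary curve on each side of every arc. The combinatorial translation is: I want to delete a set of edges from $G$ so that each remaining component is a \emph{path} (a graph of maximal degree $2$ that is a tree), since a linear chain of pants is precisely what is dual to a path.

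First I would observe that a flute surface is exactly a surface admitting a pants decomposition whose dual graph is a (finite or infinite) path: the one-holed-triangle picture of Figure \ref{fig:triangles} shows that a pair of pants carrying part of a flute either closes off one cuff with a puncture/boundary (an endpoint of the path) or passes the chain through (an interior vertex), so the perforated-Farey description of the previous subsection coincides with "linear chain of pants." Hence it suffices to prove the following purely graph-theoretic fact: \emph{every connected graph of maximum degree at most $3$ has a spanning subgraph, obtained by deleting edges, all of whose components are paths.} Equivalently, one may properly color the edges so that deleting one color class leaves only paths; even more simply, it suffices to find a set $F$ of edges meeting every vertex of degree $3$ and containing no path of length $2$, i.e. an \emph{induced matching} that dominates the degree-$3$ vertices — then deleting $F$ drops every vertex to degree $\le 2$ and kills all cycles provided we are slightly more careful.

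The cleanest route is a greedy/exhaustion argument, which works uniformly whether $G$ is finite or countably infinite. Enumerate the vertices $v_1, v_2, \dots$; process them in order, and whenever the current vertex still has degree $3$ in the graph with the edges deleted so far, delete one of its incident edges, chosen so that the other endpoint has not already had an edge deleted at it (this is possible because a degree-$3$ vertex has three neighbours and we only need to avoid a bounded local configuration — a short case check on how many of the three neighbours are already "used"). This guarantees the deleted edges form an induced matching, so no new degree drops are caused where they are not wanted, every vertex ends with degree $\le 2$, and — because the deleted set hits enough vertices — the remaining components contain no cycle through a formerly-degree-$3$ vertex; any remaining cycle would have to lie in a part of $G$ that was already a cycle of degree-$2$ vertices, which one then breaks by deleting a single extra edge per such cycle. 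The resulting $\Gamma$ is the set of curves dual to the deleted edges, and each component of $X \setminus \Gamma$ has dual graph a path, hence is a flute surface.

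The main obstacle is the case analysis in the greedy step: ensuring that one can always delete an edge at a live degree-$3$ vertex without creating a path of length two among the deleted edges (the "induced matching" condition), and simultaneously not stranding a cycle. I expect this to be a short but slightly fiddly local argument — at a degree-$3$ vertex at most ... of its three neighbours can already be saturated, leaving a usable edge — together with the separate, easy observation that the only cycles surviving the matching deletion are cycles all of whose vertices had degree $2$ to begin with, each of which is destroyed by removing one more edge. Infiniteness causes no trouble since the construction is local and the enumeration exhausts all vertices.
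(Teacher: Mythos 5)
Your reduction to the dual graph is the right starting point (and matches the paper's setup), but the proof has a genuine gap in what it identifies as a flute surface. In this paper a flute surface is, by definition, a surface arising from the perforated Farey construction: it is of infinite type, has genus $0$, and has exactly \emph{one} non-isolated end, with the punctures/boundary circles accumulating only there. Dually, a complementary piece of the pants decomposition is a flute exactly when its dual graph is an \emph{infinite, one-ended} tree (a ray is fine, but so is a one-ended tree with finite branches). Your target condition --- ``every component of the remaining graph is a path, finite, half-infinite or bi-infinite'' --- is therefore wrong in the direction that matters: a finite chain of pants is a finite-type surface, and a bi-infinite chain has two non-isolated ends; neither is a flute, and the later argument (Theorem \ref{thm_restated}) genuinely needs each piece to carry a perforated Farey triangulation, which finite-type pieces cannot. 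Your greedy deletion gives no control over this. Concretely, take the bi-infinite caterpillar: a bi-infinite path $u_i$, $i\in\Z$, with a pendant vertex $w_i$ at each $u_i$ (a perfectly possible dual graph). Deleting the edges $u_{2i}u_{2i+1}$ makes every degree $\le 2$ and every component a path, but \emph{all} components are finite paths $w_{2i+1}u_{2i+1}u_{2i+2}w_{2i+2}$, so every piece of $X\setminus\Gamma$ is of finite type and the lemma's conclusion fails. Ensuring that the pieces stay infinite and one-ended is precisely the content of the lemma, and your proposal does not address it; the paper does so by first passing to a spanning tree (so the complement of finitely many well-chosen curves stays connected and of genus $0$), then repeatedly cutting only along ``relevant'' edges (those separating two infinite subtrees) adjacent to an infinite geodesic, which peels off one- or two-ended flutes and never creates finite pieces, and finally cutting each two-ended flute once more to make it one-ended.

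Two smaller points. First, the induced-matching step is shakier than you suggest: in a subcubic graph consisting of a triangle with one outgoing edge at each triangle vertex (three pants glued in a cycle, sitting inside an infinite surface), no induced matching dominates all three degree-$3$ vertices, so the ``short local case check'' does not go through as stated; an ordinary (non-induced) edge set lowering all degrees to $\le 2$, followed by breaking surviving cycles, would do, but this is exactly the part of your argument that gives no control over finiteness or the number of ends of the components. Second, the equivalence ``flute $\Leftrightarrow$ dual graph is a path'' is also stronger than needed in the other direction: you do not need the pieces to be chains, only one-ended infinite trees, which is what the paper's relevant/irrelevant edge bookkeeping produces.
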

\begin{proof}
Consider $P$, a pants decomposition of $X$. Let $G_P$ be its dual graph. The edges of $G_P$ are the curves of $P$, and the vertices are the pants. The degree of every vertex is $3$, except for vertices which correspond to pants with one or two boundary curves which are isolated planar ends, and whose degree is either two or one. 

Now choose a spanning tree $T_0$ of $G_P$. Every vertex of $G_P$ belongs to $T_0$, but certain edges do not. Let $E_0$ be the set of edges of $G_P$ that do not belong to $T$ and let $\Gamma_0$ be the corresponding curves of $P$. We set $X_0=X\setminus \Gamma_0$. Note that by construction $X_0$ is of genus $0$, and we have a subset of $P$ that is a pants decomposition of $X_0$ and whose dual graph is $T_0$. Vertices of $T_0$ are either of degree $1$, $2$ or $3$: those of degree $1$ and $2$ correspond to pants bounding either isolated planar ends, or curves in $\Gamma_0$. 

The next step in the construction will be iterated.

If $T_0$ is one ended, then $X_0$ is a flute surface. If not, it contains an infinite geodesic $c_1$ (for the graph metric). As $T_0$ is a tree, each of its edges is separating. If an edge does not separate two infinite trees, we call it irrelevant. Otherwise it is relevant. Note that for instance if $T_0$ is an infinite trivalent tree, every edge is relevant. On the contrary, if $T_0$ is one ended, all edges are irrelevant.

We consider the set of all edges of $T_0$ that are both adjacent to $c_1$ and relevant and denote by $\Gamma_1\subset \Gamma_0$ the corresponding multicurve. Note that $X_0\setminus \gamma_1 = U\sqcup V$ where $U$ is homeomorphic to a two ended flute surface. We set $X_1=V$ and repeat the above procedure, possibly infinitely many times. 

The end result of the procedure is a multicurve $\Gamma'=\cup_{i \in I} \Gamma_i\subset P$ where $I$ is an index set (possibly empty, possibly infinite) such that $X\setminus \Gamma'$ is a collection of one or two ended flute surfaces. To obtain a collection of one ended flute surfaces, it suffices to add to $\Gamma'$ a single relevant curve for each of the two end ended flute surfaces, which results in the multicurve $\Gamma$ as desired.
\end{proof}

\subsection{Triangulating any surface}

Here we prove the following, which is a slight generalization of Theorem \ref{thm:main}.

\begin{theorem}\label{thm_restated}
Let $X$ be an infinite type surface with boundary consisting in a collection of simple closed curves. Then there exists an ideal multiarc $T$ such that $X\setminus T$ is a collection of triangles or one holed monogons. 
\end{theorem}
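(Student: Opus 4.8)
The plan is to combine the three ingredients assembled in this section: the decomposition of $X$ into flute surfaces (Lemma~\ref{lem_flutedecomp}), the perforated Farey triangulation of a flute in the spirit of Proposition~\ref{prop_flute}, and the local move of Lemma~\ref{lem_twotriangles}. First I would fix a pants decomposition $P$ of $X$, available by Richards' classification, arranged so that every component of $\partial X$ is a cuff of one of the pants of $P$. Applying Lemma~\ref{lem_flutedecomp} --- whose argument is insensitive to the presence of boundary, the boundary components of $X$ simply becoming boundary components of the pieces --- yields a sub-collection $\Gamma\subset P$ with $X\setminus\Gamma = \bigsqcup_k F_k$ a disjoint union of flute surfaces. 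Each boundary component of a flute $F_k$ is then either a component of $\partial X$ or a copy of a curve of $\Gamma$, each $\gamma\in\Gamma$ accounting for exactly two such boundary components (possibly in one flute, possibly in two), and every puncture of $X$ is a puncture of exactly one $F_k$.

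Next I would triangulate each flute. As in the proof of Proposition~\ref{prop_flute}, each $F_k$ carries a perforated Farey triangulation: an ideal multiarc whose complementary regions are ideal triangles, once-punctured triangles, and one-holed triangles, arranged so that each puncture of $F_k$ lies in a once-punctured triangle and each boundary component of $F_k$ lies in a one-holed triangle. Subdividing each once-punctured triangle into three ideal triangles by arcs joining its three ideal vertices to the enclosed puncture (Figure~\ref{fig:punctured-triangle}), I obtain on $F_k$ an ideal multiarc whose complementary regions are ideal triangles together with exactly one one-holed triangle around each boundary component of $F_k$.

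Now I would reglue. For each $\gamma\in\Gamma$, the one-holed triangle lying on one side of $\gamma$ and the one lying on the other side together form, inside $X$, a cylinder carrying three ideal vertices on each of its two boundary circles; by Lemma~\ref{lem_twotriangles} this cylinder is triangulated by six further arcs. Doing this for every $\gamma\in\Gamma$ replaces each one-holed triangle coming from $\Gamma$ by ideal triangles, so that the only non-triangular complementary regions that survive are the one-holed triangles enclosing the components of $\partial X$. Each of these I would subdivide, by a loop encircling the boundary component together with a diagonal of the remaining quadrilateral, into two ideal triangles and one one-holed monogon. Letting $T$ be the union of all the arcs introduced, $T$ is an ideal multiarc and $X\setminus T$ is a disjoint union of ideal triangles and one-holed monogons, exactly one one-holed monogon for each component of $\partial X$ --- in particular an ideal triangulation when $\partial X=\varnothing$, which gives Theorem~\ref{thm:main}.

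The step I expect to require the most care is the regluing along $\Gamma$: one must check that the perforated Farey triangulations of the $F_k$ can be chosen consistently so that every puncture and every boundary component sits inside a special triangle; that the two one-holed triangles meeting along a given $\gamma$ really do assemble into a cylinder of the form handled by Lemma~\ref{lem_twotriangles} (which holds because the construction furnishes each with exactly three ideal vertices); and that after all the local surgeries the arcs remain pairwise disjoint, pairwise non-isotopic, and essential, so that $T$ is a genuine ideal multiarc with no complementary region other than a triangle or a one-holed monogon left behind. Everything else amounts to a routine inspection of finitely-sided pieces.
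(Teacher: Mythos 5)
Your proposal is correct and follows essentially the same route as the paper: decompose $X$ into flute pieces via Lemma~\ref{lem_flutedecomp}, triangulate each flute by a perforated Farey triangulation (subdividing the punctured triangles as in Figure~\ref{fig:punctured-triangle}), triangulate the cylinders formed along the curves of $\Gamma$ by Lemma~\ref{lem_twotriangles}, and cut off a one-holed monogon around each component of $\partial X$. Your handling of the boundary components (loop plus diagonal) is in fact slightly more explicit than the paper's ``single arc'' phrasing, but it is the same construction.
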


\begin{proof}
Let $\Gamma$ be the collection of curves described by Lemma \ref{lem_flutedecomp} and let $F$ be one of the flute components of $X \setminus \Gamma$.  We then take a perforated Farey triangulation of $F$ and extend it to $F$ by adding three arcs for each isolated planar end, and a single arc for each boundary component of $X$.  Denote by $T'$ the resulting multiarc.

By construction each non-triangular component of $X \setminus T'$ is either a one holed monogon or two triangles pasted together along a simple closed curve.  By Lemma \ref{lem_twotriangles} we can extend $T'$ to a multiarc $T$ satisfying the properties of the theorem.
\end{proof}

\section{Completing ideal multiarcs into triangulations}

We can now prove Theorem \ref{thm:characterization} which we restate for convenience.
\begin{theorem} A collection of arcs $\mu_0$ can be completed into a triangulation if and only if $\ii(\mu_0,\alpha) < \infty$ for any simple closed curve $\alpha$.
\end{theorem}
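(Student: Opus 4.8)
The plan is to prove the two directions separately, with the forward (necessity) direction being essentially immediate and the backward (sufficiency) direction requiring the bulk of the work via a cut-and-triangulate argument building on Theorem~\ref{thm_restated}.

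For necessity, suppose $\mu_0$ can be completed to a triangulation $\mu \supseteq \mu_0$, and let $\alpha$ be a simple closed curve. Put $\alpha$ in minimal position with respect to $\mu$. Each time $\alpha$ crosses an arc of $\mu$, it enters a new triangle of $X\setminus\mu$; since $\alpha$ is compact, it can only meet finitely many triangles (a traversal of infinitely many distinct triangles would force $\alpha$ to accumulate, contradicting properness of the triangulation as established via Corollary~\ref{cor:twoends} — more directly, $\alpha$ compact means its image is covered by finitely many triangles, hence meets finitely many arcs). Thus $\ii(\mu_0,\alpha)\le \ii(\mu,\alpha)<\infty$.

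For sufficiency, assume $\ii(\mu_0,\alpha)<\infty$ for every simple closed curve $\alpha$. The idea is to cut $X$ along $\mu_0$ and triangulate the pieces. First I would argue that the hypothesis forces each complementary region of $\mu_0$ to again be a (possibly infinite type) surface with boundary consisting of the doubled arcs of $\mu_0$ (arranged into bi-infinite boundary paths and compact boundary curves), together with the original ends. The crucial point is that cutting along $\mu_0$ does not destroy the surface structure in a pathological way: the finite-intersection condition ensures that no sequence of arcs of $\mu_0$ accumulates onto a simple closed curve, so locally around any compact set only finitely many arcs of $\mu_0$ appear and $X\setminus\mu_0$ is a genuine (disjoint union of) surface(s) of finite or infinite type. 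Then I would invoke Theorem~\ref{thm_restated}: each complementary piece $Y$ is a surface with simple-closed-curve boundary (after possibly capping off, or treating boundary arcs appropriately), so it admits an ideal multiarc $T_Y$ cutting it into triangles and one-holed monogons, the latter being further triangulable by the methods already developed (Lemma~\ref{lem_twotriangles} and the flute constructions). Reassembling, $\mu = \mu_0 \cup \bigcup_Y T_Y$ is an ideal triangulation of $X$ containing $\mu_0$.

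The main obstacle is the first step of sufficiency: verifying that $X\setminus\mu_0$ is well-behaved. One must rule out the scenario where infinitely many arcs of $\mu_0$ limit onto a simple closed curve or onto each other in a way that makes a complementary region fail to be an honest surface (e.g. a region that is not locally finite, or whose "boundary" is not a union of curves and bi-infinite lines). This is precisely where $\ii(\mu_0,\alpha)<\infty$ does the heavy lifting: if arcs accumulated, one could find a simple closed curve meeting the accumulation infinitely often, contradicting the hypothesis. Making this accumulation argument precise — choosing the test curve $\alpha$ and counting intersections — is the technical heart of the proof; the rest is an application of the triangulation machinery already in hand. I would also need to handle the bookkeeping of which complementary pieces have genuine boundary curves versus ends arising from $\mu_0$, but Theorem~\ref{thm_restated} is stated flexibly enough (surfaces with simple-closed-curve boundary) to absorb this.
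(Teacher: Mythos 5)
Your necessity argument is essentially the paper's: the point is local finiteness of a triangulation, and the clean way to get it (as the paper does) is to take a point $z\in\alpha$ accumulated by intersection points with $\mu_0$ and rule out $z$ lying in a complementary disk or on an arc of the completed triangulation $\mu$, since either option gives a neighbourhood of $z$ meeting only finitely many arcs of $\mu\supseteq\mu_0$. Your parenthetical appeal to Corollary \ref{cor:twoends} should be dropped: in the paper that corollary is deduced \emph{from} Theorem \ref{thm:characterization}, so using it here is circular; and the claim ``$\alpha$ compact means its image is covered by finitely many triangles'' is precisely the local finiteness that needs the small argument above, not a substitute for it.

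The sufficiency direction, however, has a genuine gap. After cutting along $\mu_0$, the complementary pieces are surfaces whose boundary consists of non-compact lines (two copies of each arc of $\mu_0$), possibly infinitely many on a single piece; Theorem \ref{thm_restated} only covers boundary consisting of simple closed curves, so it does not apply, and ``capping off, or treating boundary arcs appropriately'' is exactly the missing content --- capping a bi-infinite line is not an operation like capping a curve, and neither doubling nor gluing half-planes obviously yields a triangulation of the piece relative to its boundary lines that reassembles with $\mu_0$ into a triangulation of $X$. In effect you need a relative version of Theorem \ref{thm_restated} for infinite-type surfaces with boundary lines, which is not in the paper and is not routine to extract from Lemmas \ref{lem_flutedecomp} and \ref{lem_twotriangles}. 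In addition, the local-finiteness claim (that $\ii(\mu_0,\alpha)<\infty$ for all curves forces $\mu_0$ to be realizable without accumulation at any point of $X$) is asserted rather than proved; you would have to exhibit an essential simple closed curve crossing the accumulation locus essentially, e.g.\ via geodesic representatives. The paper sidesteps both problems by never cutting along $\mu_0$: it fixes a pants decomposition $\{\alpha_i\}$ and inductively builds triangulated subsurfaces $X_i$ exhausting $X$, at each step using $\ii(\mu_{i-1},\alpha_i)<\infty$ to ensure that the regular neighbourhood of $\alpha_i\cup\mu_{i-1}$ contributes only finitely many essential arcs and curves, adding finitely many new arcs (two arcs $\gamma_1,\gamma_2$ cutting off a finite-type piece containing $\alpha_i$, plus a finite triangulation of that piece), and invoking Theorem \ref{thm_restated} only for complementary components with compact boundary curves, where it genuinely applies.
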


\begin{proof}
First, assume $\alpha$ and $\mu_0$ have infinite intersection.  Let $z$ be a point in $\alpha$ accumulated by intersection points with $\mu_0$.  Now, if $\mu$ is a triangulation of $X$ containing $\mu_0$ then $z$ cannot belong to any disk in $X \setminus \mu$, as this would imply infinitely many arcs in $\mu_0$ intersect this disk.  Similarly, $z$ cannot belong an arc $\delta$ in $\mu$, as this would imply that infinitely many arcs of $\mu_0$ intersect the union of the two disks bounded by $\delta$.

To prove the other direction, we construct a triangulation containing $\mu_0$. We begin by looking at $X\setminus \mu_0$ to see if it contains any finite type pieces (hence bounded by arcs in $\mu_0$). If so, we triangulate the pieces that are not already triangles, and we set $X_0$ to be this collection of triangles. If not, $X_0$ is the empty set.

Let $P$ be a pants decomposition of $X$. The main idea is to add arcs to $\mu_0$ such that they contain triangles that contain every curve in $P$. To do so we give curves in $P$ an order (so $P= \cup_{i\in \N}\alpha_i$). Now we proceed inductively to construct a multiarc $\mu_i$, and a subsurface $X_i$, which will be triangulated by arcs of $\mu_i$ and which contains all curves $\alpha_1, \hdots, \alpha_i$. By construction $\mu_i$ will contain $\mu_{i-1}$. The induction starts with $X_0$ and $\mu_0$.

We now describe the inductive step. Let $i>0$. There are two cases for $\alpha_i$, depending on whether it intersects $\mu_{i-1}$. 

\underline{Case 1:} $\alpha_i$ and $\mu_{i-1}$ are disjoint.

First, suppose $Y$ is a component of $X \setminus \alpha_i$ such that $Y$ and $\mu_{i-1}$ do not intersect. (For this to happen, note that $\alpha_i$ must be a separating curve.) Then from Theorem \ref{thm_restated} we can construct a multiarc in $Y$ whose complement is a collection of triangles and single one holed monogon containing $\partial Y$. We can add this multiarc to $\mu_{i-1}$ and look at another component.

Note that unless $\mu_{i-1} = \emptyset$, at least one component of $X\setminus \alpha_i$ must intersect $\mu_{i-1}$. 

We now consider $Y$, a component of $X \setminus \alpha_i$ which intersects $\mu_{i-1}$. Suppose it has a connected boundary (hence $\alpha_i$ is again separating). Let $a$ be an embedding of $[0,1]$ such that $a \cap \mu_{i-1} = a(0)$, and $a(1)\in \partial Y$. Now, let $\gamma$ be the arc of $\mu_{i-1}$ intersecting $a$ and let $b$ be a component of $\gamma \setminus a$.  Finally, define $\gamma_1$ be the boundary of a regular neighborhood of $a \cup b \cup \partial Y$.  Note that, up to homotopy, $\gamma_1$ is an arc disjoint from $\mu_{i-1}$ and $Y \setminus \gamma_1$ contains a one holed monogon.

If $Y$ contains arcs in $\mu_{i-1}$ and does not have connected boundary (and in particular $\alpha_i$ is nonseparating) then add $\gamma_1$ to $\mu_{i-1}$ and repeat the above step in order to define at an arc $\gamma_2$ associated to the second boundary component.

We therefore arrive at two arcs $\gamma_1$, $\gamma_2$ that are disjoint from $\mu_{i-1}$ such that the component $Z$ of $X \setminus \{\gamma_1, \gamma_2\}$ containing $\alpha_i$ is of finite type.  Let $\zeta_{i-1}$ be a triangulation of $Z$.  We then define $\mu_i$ to be the union of $\mu_{i-1}$, $\gamma_1$, $\gamma_2$, and $\zeta_{i-1}$.  We appropriately define $X_i$ to be the subsurface spanned by $\mu_i$, that is, $X_i = X_{i-1} \cup Z \cup \{\gamma_1, \gamma_2\}$.

\underline{Case 2:} $\alpha_i$ and $\mu_{i-1}$ intersect.

Let $\Gamma = \alpha_i \cup \mu_{i-1}$ and let $N(\Gamma)$ be a regular neighborhood of $\Gamma$. Define $A(\Gamma)$ to be the set of essential arcs in $\partial N(\Gamma)$ and define $C(\Gamma)$ to be the set of essential curves in $\partial N(\Gamma)$.  Note that since $i(\mu_{i-1}, \alpha_i) < \infty$ both $C(\Gamma)$ and $A(\Gamma)$ are finite.  For each curve in $C(\Gamma)$, repeat the argument from case 1 to arrive at a multiarc $\mu_{i-1}'$ and a subsurface $X_{i-1}'$.  Now, there exists a subsurface $X_i \subset X$ such that $X_{i-1}' \cup A(\Gamma) \subset X_i$ and $X_i \setminus X_{i-1}'$ is finite type.  We can therefore extend $\mu_{i-1}'$ to a triangulation $\mu_i$ of $X_i$.  Note that $X_i$ containts $\alpha_i$ as desired.

The surfaces $\cup_{i\in \N} X_i$ clearly form an exhaustion of $X$. We set $T=\cup_{i\in \N} \mu_i$, which by construction is thus a triangulation of $X$. 
\end{proof}

{\em Address:}\\
Department of Mathematics, University of Luxembourg, Luxembourg \\
{\em Email:}
 \href{mailto:alan.mcleay@unilu.ch}{alan.mcleay@uni.lu}, \href{mailto:hugo.parlier@unilu.ch}{hugo.parlier@uni.lu}\\


\begin{thebibliography}{99}

\bibitem{AlessandriniEtAl} Alessandrini, Daniele, Liu, Lixin, Papadopoulos, Athanase, Su, Weixu and Sun, Zongliang. On Fenchel-Nielsen coordinates on Teichmüller spaces of surfaces of infinite type. Ann. Acad. Sci. Fenn. Math. 36 (2011), no. 2, 621--659.

\bibitem{Basmajian} Basmajian, Ara. Hyperbolic structures for surfaces of infinite type. Trans. Amer. Math. Soc. 336 (1993), no. 1, 421--444. 
\bibitem{Basmajian-Saric} Basmajian, Ara and \v{S}ari\'c, Dragomir. Geodesically complete hyperbolic structures. Math. Proc. Cambridge Philos. Soc. 166 (2019), no. 2, 219--242.

\bibitem{Bavard} Bavard, Juliette. Hyperbolicit\'e du graphe des rayons et quasi-morphismes sur un gros groupe modulaire. Geom. Topol. 20 (2016), no. 1, 491--535.

\bibitem{DGGV} Durham, Matthew Gentry, Fanoni, Federica and Vlamis, Nicholas. Graphs of curves on infinite-type surfaces with mapping class group actions. Ann. Inst. Fourier (Grenoble) 68 (2018), no. 6, 2581--2612.

\bibitem{FGM} Fanoni, Federica, Ghaswala, Tyrone and McLeay, Alan. Homeomorphic subsurfaces and the omnipresent arcs,  Ann. H. Lebesgue, to appear.

\bibitem{Fossas-Parlier} Fossas, Ariadna and Parlier, Hugo. Flip graphs for infinite type surfaces. Groups Geom. Dyn., to appear.

\bibitem{Korkmaz-Papadopoulos} Korkmaz, Mustafa and Papadopoulos, Athanase. On the ideal triangulation graph of a punctured surface. Ann. Inst. Fourier (Grenoble) 62 (2012), no. 4, 1367--1382.

\bibitem{Penner} Penner, Robert. The decorated Teichm\"uller space of punctured surfaces. Comm. Math. Phys. 113 (1987), no. 2, 299--339. 

\bibitem{Richards} Richards, Ian. On the classification of noncompact surfaces. Trans. Amer. Math. Soc. 106 (1963), 259--269.

\bibitem{Saric}  \v{S}ari\'c, Dragomir. Fenchel-Nielsen coordinates on upper bounded pants decompositions. Math. Proc. Cambridge Philos. Soc. 158 (2015), no. 3, 385--397.
\end{thebibliography}
\end{document}